\documentclass{article}

\usepackage{amsmath,amssymb}
\usepackage{amsthm}
\usepackage{url}

\usepackage{algorithm}
\usepackage{algpseudocode}

\newtheorem{theorem}{Theorem}
\newtheorem{corollary}{Corollary}

\theoremstyle{remark}
\newtheorem{example}{Example}

\begin{document}

\title{A convolution for the complete and\\ elementary symmetric functions}
\author{Mircea Merca\footnote{mircea.merca@profinfo.edu.ro}
	\\ 
	\small Department of Mathematics,
	\small University of Craiova, 
	\small Craiova, 200585 Romania}
\date{}
\maketitle

\begin{abstract}
In this paper we give a convolution identity for the complete and elementary symmetric functions. 
This result can be used to proving and discovering some combinatorial identities involving $r$-Stirling numbers, $r$-Whitney numbers and $q$-binomial coefficients. As a corollary we derive a generalization of the quantum Vandermonde's convolution identity.
\\
\\
{\bf Keywords:} symmetric function, convolution, composition, integer partition, $q$-binomial coefficient, $r$-Stirling number, $r$-Whitney number
\\
\\
{\bf MSC 2010:} 05A10, 05A19, 11B65, 11B73
\end{abstract}

\section{Introduction.}

Let $n$ be a positive integer. Being given a set of variables $\{x_1,x_2,\ldots,x_n\}$, recall \cite{Mac95} that the $k$th elementary symmetric function $e_k (x_1,x_2,\dots,x_n)$ and the $k$th complete homogeneous symmetric function $h_k (x_1,x_2,\dots,x_n)$ on these variables are given, respectively, by
\begin{eqnarray*}
	e_k (x_1,x_2,\ldots,x_n)=\sum _{1\le i_1 <i_2<\ldots <i_k\le n }x_{i_1} x_{i_2} \ldots x_{i_k}\ ,\\
	h_k (x_1,x_2,\ldots,x_n)=\sum _{1\le i_1\le i_2\le\ldots\le i_k\le n }x_{i_1} x_{i_2} \ldots x_{i_k}\ ,
\end{eqnarray*}
for $k=1,2,\ldots,n$. We set 
$e_0(x_1,\ldots,x_n)=1$ and $h_0(x_1,\ldots,x_n)=1$ 
by convention.
For $k>n$ or $k<0$, we set $e_k(x_1,\ldots,x_n)=0$ and $h_k(x_1,\ldots,x_n)=0$.

A composition of $n$ is a way of writing $n$ as the sum of positive integers, i.e.,
$$n = \lambda_1 + \lambda_2 + \cdots + \lambda_k\ .$$
If $\lambda_1\ge \lambda_2 \ge \cdots \ge \lambda_k$ then, this representation is known as an integer partition. In order to indicate that $\lambda=[\lambda_1,\lambda_2,\ldots,\lambda_k]$ is a partition of $n$, we use the notation $\lambda \vdash n$, introduced by Andrews in \cite{Andrews76}. The number of parts of $\lambda$ will be called the length of $\lambda$ and denoted by $l(\lambda)=k$. For each $i$ ($1\le i \le n$), the number of times that $i$ appears as a part of $\lambda$ is called the multiplicity of $i$ in $\lambda$, denoted $t_i(\lambda)$. An alternate way to write partitions down then is by the standard notation $$\lambda=[1^{t_1(\lambda)},2^{t_2(\lambda)},\ldots,n^{t_n(\lambda)}]\ .$$ 
Clearly
$$l(\lambda)=t_1(\lambda)+t_2(\lambda)+\cdots+t_n(\lambda)\ .$$
For each partition $\lambda$, we note
$$f_{\lambda}(x_1,\ldots,x_n)=\prod_{i\ge 1}f_i^{t_i(\lambda)}(x_1,\ldots,x_n)\quad\ ,$$
where $f$ is any of these complete or elementary symmetric functions.

In this paper, we shall prove:

\begin{theorem}\label{T:1}
	Let $k$, $m$ and $n$ be three positive integers and let $[\lambda_1,\lambda_2,\ldots,\lambda_m]$ be a composition of $n$. Then
	\begin{equation}\label{e:1.1}
	f_k(x_1,x_2,\ldots,x_n)=\sum_{k_1+k_2+\cdots+k_m=k}\prod_{i=1}^{m}f_{k_i}(x_{a_{i-1}+1},\ldots,x_{a_i})\ ,
	\end{equation}
	where 
	$$a_0=0\ ,\qquad a_i=\lambda_1+\lambda_2+\cdots+\lambda_i\ ,$$
	$x_1,x_2,\ldots,x_n$ are independent variables and $f$ is any of these complete or elementary symmetric functions.
\end{theorem}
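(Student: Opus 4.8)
The plan is to prove both cases at once through the classical generating-function encodings of these symmetric functions. Recall that
\[
E(t)=\sum_{k\ge 0}e_k(x_1,\ldots,x_n)\,t^k=\prod_{i=1}^{n}(1+x_i t),
\qquad
H(t)=\sum_{k\ge 0}h_k(x_1,\ldots,x_n)\,t^k=\prod_{i=1}^{n}\frac{1}{1-x_i t},
\]
where the product identities are standard and each series is understood as a formal power series in $t$ whose coefficients are polynomials in the $x_i$. Writing $F(t)$ for whichever of $E(t)$ or $H(t)$ corresponds to the chosen $f$, the single feature I shall exploit is that $F(t)$ is a product over the individual variables, hence multiplicative under any partition of the variable set into disjoint blocks.

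First I would split the $n$ variables according to the given composition into the $m$ consecutive blocks $B_i=\{x_{a_{i-1}+1},\ldots,x_{a_i}\}$, which is legitimate precisely because $a_0=0$, $a_m=n$, and $|B_i|=\lambda_i$. Since the defining product for $F(t)$ ranges over all $n$ factors, it factors as
\[
F(t)=\prod_{i=1}^{m}\Bigl(\prod_{x\in B_i}g(x,t)\Bigr)=\prod_{i=1}^{m}F_i(t),
\]
where $g(x,t)$ is the single-variable factor ($1+xt$ or $(1-xt)^{-1}$) and $F_i(t)=\sum_{j\ge 0}f_j(x_{a_{i-1}+1},\ldots,x_{a_i})\,t^j$ is the corresponding generating series on the $i$th block.

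Next I would extract the coefficient of $t^k$ from both sides. On the left this coefficient is $f_k(x_1,\ldots,x_n)$ by definition of $F(t)$. On the right, multiplying out the $m$ formal power series $F_1(t),\ldots,F_m(t)$ and collecting the terms of total degree $k$ gives exactly
\[
\sum_{k_1+k_2+\cdots+k_m=k}\;\prod_{i=1}^{m}f_{k_i}(x_{a_{i-1}+1},\ldots,x_{a_i}),
\]
the summation running over all $m$-tuples of nonnegative integers with sum $k$; terms with some $k_i$ exceeding the block size $\lambda_i$ vanish automatically under the convention $f_{k_i}=0$ for $k_i>\lambda_i$, so the sum may be written without upper bounds. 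Equating the two coefficients yields (\ref{e:1.1}).

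I do not anticipate a genuine obstacle here: the whole argument rests on the multiplicativity of the generating function over disjoint variable sets, which holds verbatim for both $E(t)$ and $H(t)$, so the two cases need not be separated. The only points demanding a little care are purely bookkeeping: confirming that $a_0=0$ and $a_m=n$ make the blocks exhaust the variables without overlap, and checking that the vanishing conventions for $f_{k_i}$ let us state the convolution as an unrestricted sum over compositions. An alternative, if one prefers to avoid generating functions, is to induct on $m$: the base case $m=2$ is the two-block addition formula, and the inductive step regroups the last two blocks, but this route merely reproduces the coefficient extraction step by step and offers no real advantage.
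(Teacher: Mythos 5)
Your proof is correct and takes essentially the same approach as the paper: both rest on the factorization of the generating function $E(t)$ or $H(t)$ over the blocks $B_1,\ldots,B_m$ and coefficient extraction. The only difference is organizational --- the paper first derives the two-block Cauchy-product identity and then iterates it $m-1$ times (the inductive route you mention at the end), whereas you expand the $m$-fold product in one step; the underlying argument is identical.
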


It is clear that $f_{k_i}(x_{a_{i-1}+1},\ldots,x_{a_i})=0$ when $k_i>\lambda_i$ or $k_i<0$. For $k_i=0$, we have $f_{k_i}(x_{a_{i-1}+1},\ldots,x_{a_i})=1$. If $k_1,k_2,\ldots,k_m$ are nonnegative integers such that $k_1+k_2+\cdots+k_m=k$, then $[k_1+1,k_2+1,\ldots,k_m+1]$ is a composition of $k+m$ into exactly $m$ parts. Therefore, we can say that the right side of \eqref{e:1.1} is a summation over all compositions
$$[k_1+1,k_2+1,\ldots,k_m+1]$$
of $k+m$ into exactly $m$ parts with the property 
$$0\le k_i\le\lambda_i\ ,\quad \mbox{for}\quad i=1,2,\ldots,m\ .$$

\begin{example}  Taking into account that $[2,1,2]$ is a composition of $5$, for $n=5$ and  $m=3$, we have
	\begin{eqnarray*}
		&&f_3(x_1,x_2,x_3,x_4,x_5) = f_1(x_3)f_2(x_4,x_5)+f_1(x_1,x_2)f_2(x_4,x_5)\\
		&&\qquad+f_1(x_1,x_2)f_1(x_3)f_1(x_4,x_5)+f_2(x_1,x_2)f_1(x_4,x_5)+f_2(x_1,x_2)f_1(x_3)\ ,
	\end{eqnarray*}
	where $f$ is any of these complete or elementary symmetric functions.
\end{example}

\begin{corollary}\label{C:1.1}
	Let $k$, $m$ and $n$ be three positive integers and let $x_1,x_2,\ldots,x_n$ be $n$ independent variables. Then
	\begin{equation*}
	f_k(\underbrace{x_1,\ldots,x_n,\ldots,x_1,\ldots,x_n}_{m}) = \sum_{\substack{\lambda\vdash k\\l(\lambda)\le m}}\binom{m}{m-l(\lambda),\lambda}f_{\lambda}(x_1,\ldots,x_n)\ ,
	\end{equation*}
	where 
	$$\binom{m}{m-l(\lambda),\lambda}=\frac{m!}{(m-l(\lambda))!t_1(\lambda)!\cdots t_k(\lambda)!}\ $$
	is the multinomial coefficient and $f$ is any of these complete or elementary symmetric functions.
\end{corollary}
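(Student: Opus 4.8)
The plan is to obtain the corollary as a direct specialization of Theorem~\ref{T:1}, followed by a bookkeeping argument that regroups the resulting sum according to integer partitions. First I would apply Theorem~\ref{T:1} to the $N=mn$ variables obtained by concatenating $m$ copies of the list $(x_1,\ldots,x_n)$, using the composition $[\underbrace{n,n,\ldots,n}_{m}]$ of $N$. With this choice one has $a_i=in$, so the $i$th block $(x_{a_{i-1}+1},\ldots,x_{a_i})$ is precisely one copy of $(x_1,\ldots,x_n)$. Since \eqref{e:1.1} is an identity of polynomials in independent variables, it remains valid after the substitution that identifies the variables block by block, and it yields
$$f_k(\underbrace{x_1,\ldots,x_n,\ldots,x_1,\ldots,x_n}_{m}) = \sum_{k_1+k_2+\cdots+k_m=k}\prod_{i=1}^{m}f_{k_i}(x_1,\ldots,x_n).$$

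Next comes the combinatorial heart of the argument: every summand on the right depends only on the multiset $\{k_1,\ldots,k_m\}$, not on the order of its entries. I would discard the zero entries (for which $f_0=1$) and record the remaining positive entries as a partition $\lambda\vdash k$; the number of positive entries is $l(\lambda)$, so the tuple $(k_1,\ldots,k_m)$ contains exactly $m-l(\lambda)$ zeros and can exist only when $l(\lambda)\le m$. For such a $\lambda$ the corresponding product equals $\prod_{j\ge1}f_j^{t_j(\lambda)}(x_1,\ldots,x_n)=f_{\lambda}(x_1,\ldots,x_n)$, by the definition of $f_\lambda$ recalled in the introduction.

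Finally I would count, for each fixed $\lambda$, how many ordered tuples $(k_1,\ldots,k_m)$ collapse onto it. This is the number of distinct arrangements in a row of length $m$ of a multiset consisting of $t_j(\lambda)$ copies of $j$ for each $j\ge1$ together with $m-l(\lambda)$ zeros, which is exactly the multinomial coefficient
$$\binom{m}{m-l(\lambda),\lambda}=\frac{m!}{(m-l(\lambda))!\,t_1(\lambda)!\cdots t_k(\lambda)!}.$$
Summing over all admissible $\lambda$ then produces the stated identity. The one step requiring care is this last count, and in particular the need to treat the $m-l(\lambda)$ zero slots as a single indistinguishable block in the multinomial; this is what produces the factor $(m-l(\lambda))!$ in the denominator and enforces the constraint $l(\lambda)\le m$. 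Everything else is a routine consequence of Theorem~\ref{T:1}.
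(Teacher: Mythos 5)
Your proposal is correct and follows exactly the route the paper intends: the paper dismisses the corollary as ``immediate from Theorem~\ref{T:1}'' in the case where all blocks of variables coincide, which is precisely your specialization to the composition $[n,n,\ldots,n]$ of $mn$. Your regrouping of the tuples $(k_1,\ldots,k_m)$ by the partition of their positive entries, with the multinomial coefficient counting the arrangements including the $m-l(\lambda)$ zeros, is the bookkeeping the paper leaves implicit, and you have carried it out correctly.
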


This result is immediate from Theorem \ref{T:1}. We note that Corollary \ref{C:1.1} is the case 
$$\left\lbrace x_{a_0+1},\ldots,x_{a_1} \right\rbrace = \left\lbrace x_{a_1+1},\ldots,x_{a_2}\right\rbrace = \cdots = \left\lbrace x_{a_{m-1}+1},\ldots,x_{a_m} \right\rbrace\ $$
in Theorem \ref{T:1}.

\begin{example} For $m=3$ and $k=4$, we have
	\begin{eqnarray*}
		&&f_4(x_1,\ldots,x_n,x_1,\ldots,x_n,x_1,\ldots,x_n) =3f_1^2(x_1,\ldots,x_n)f_2(x_1,\ldots,x_n)\\
		&&\qquad+6f_1(x_1,\ldots,x_n)f_3(x_1,\ldots,x_n)+3f_2^2(x_1,\ldots,x_n)+3f_4(x_1,\ldots,x_n)\ ,
	\end{eqnarray*}
	where $f$ is any of these complete or elementary symmetric functions.
\end{example}

By the relation
$$
\prod_{i=1}^{n}(x+x_i)(x-x_i)=\prod_{i=1}^{n}(x^2-x_i^2)\ ,
$$
we deduce that
$$
e_{2k}(x_1,\ldots,x_n,-x_1,\ldots,-x_n)=(-1)^ke_k(x_1^2,\ldots,x_n^2)\ 
$$
and
$$
e_{2k+1}(x_1,\ldots,x_n,-x_1,\ldots,-x_n)=0\ .
$$
Taking into account that
$$e_k(-x_1,\ldots,-x_n)=(-1)^ke_k(x_1,\ldots,x_n)\ ,$$
the following result is a consequence of Theorem \ref{T:1}.

\begin{corollary}\label{C:1.2}
	Let $k$ and $n$ be two positive integers. Then
	$$e_k(x_1^2,\ldots,x_n^2)=\sum_{i=-k}^{k}(-1)^ie_{k+i}(x_1,\ldots,x_n)e_{k-i}(x_1,\ldots,x_n)\ .$$
\end{corollary}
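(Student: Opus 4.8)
The plan is to apply Theorem~\ref{T:1} to the $2n$ variables $\{x_1,\ldots,x_n,-x_1,\ldots,-x_n\}$ and then translate back through the relation already recorded in the excerpt. Concretely, I would begin from
$$e_k(x_1^2,\ldots,x_n^2)=(-1)^k\,e_{2k}(x_1,\ldots,x_n,-x_1,\ldots,-x_n)\ ,$$
which follows from factoring $\prod_{i=1}^n(x^2-x_i^2)$. This reduces the whole statement to expanding the right-hand side in a suitable way.

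Next I would invoke Theorem~\ref{T:1} with $m=2$ and the composition $[n,n]$ of $2n$, taking $f=e$ and replacing $k$ by $2k$. This splits the variable list into its two halves and yields
$$e_{2k}(x_1,\ldots,x_n,-x_1,\ldots,-x_n)=\sum_{k_1+k_2=2k}e_{k_1}(x_1,\ldots,x_n)\,e_{k_2}(-x_1,\ldots,-x_n)\ .$$
Then I would apply the homogeneity relation $e_{k_2}(-x_1,\ldots,-x_n)=(-1)^{k_2}e_{k_2}(x_1,\ldots,x_n)$, also noted in the excerpt, to pull the minus signs out of the second factor.

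The remaining work is purely bookkeeping of the index and the sign. Parametrizing the constraint $k_1+k_2=2k$ by $k_1=k+i$ and $k_2=k-i$, the sum over nonnegative $k_1,k_2$ becomes a sum over $i$ from $-k$ to $k$ (terms outside this range vanish automatically). Since $(-1)^{k_2}=(-1)^{k-i}=(-1)^k(-1)^i$, the factor $(-1)^k$ produced here cancels the $(-1)^k$ carried from the first display, leaving exactly
$$e_k(x_1^2,\ldots,x_n^2)=\sum_{i=-k}^{k}(-1)^i\,e_{k+i}(x_1,\ldots,x_n)\,e_{k-i}(x_1,\ldots,x_n)\ .$$

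I do not anticipate any genuine obstacle here, since the identity is essentially a specialization of Theorem~\ref{T:1}; the only place demanding care is confirming that the substitution $k_1=k+i,\ k_2=k-i$ matches the stated summation limits $-k\le i\le k$, and that the two factors of $(-1)^k$ cancel rather than reinforce one another.
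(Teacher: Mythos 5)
Your proposal is correct and follows essentially the same route as the paper: it uses the factorization $\prod_{i=1}^{n}(x+x_i)(x-x_i)=\prod_{i=1}^{n}(x^2-x_i^2)$ to identify $e_k(x_1^2,\ldots,x_n^2)$ with $(-1)^k e_{2k}(x_1,\ldots,x_n,-x_1,\ldots,-x_n)$, then applies Theorem~\ref{T:1} with the composition $[n,n]$ together with $e_{k_2}(-x_1,\ldots,-x_n)=(-1)^{k_2}e_{k_2}(x_1,\ldots,x_n)$, exactly as sketched before Corollary~\ref{C:1.2} in the paper. Your reindexing $k_1=k+i$, $k_2=k-i$ and the cancellation of the two factors of $(-1)^k$ are carried out correctly.
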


We note that Corollary \ref{C:1.2} is the case $n=2$ of the generalized Girard-Waring formula \cite{Kon96,Zen97}.
Some applications of this corollary was recently published by Merca \cite{Merca12}.
The generalized Girard-Waring formula can be used to express the monomial
symmetric functions with equal exponents in terms of elementary symmetric functions. 

In this paper, we use Theorem \ref{T:1} to proving and discovering some combinatorial identities involving $r$-Stirling numbers, $r$-Whitney numbers and $q$-binomial coefficients. This is possible because these special numbers are specializations of complete and elementary symmetric functions.
For instance, the well-known Vandermonde's convolution 
$$\sum_{i=0}^{k}\binom{t}{i}\binom{n-t}{k-i}=\binom{n}{k}$$
and its generalization
$$\sum_{k_1+k_2+\cdots+k_m=k}\binom{\lambda_1}{k_1}\binom{\lambda_2}{k_2}\cdots \binom{\lambda_m}{k_m}=\binom{\lambda_1+\lambda_2+\cdots+\lambda_m}{k}\ $$
or the similar convolution
$$\sum_{k_1+k_2+\cdots+k_m=k}\prod_{i=1}^{m}\binom{\lambda_i+k_i-1}{k_i}=\binom{\lambda_1+\lambda_2+\cdots+\lambda_m+k-1}{k}\ $$
are very special cases of this theorem. The $q$-analogues of these generalizations are also obtained as specializations of our theorem. For $r<t\le k<n$, Broder \cite{Broder84} proved two identities involving $r$-Stirling numbers of both kind:
\begin{equation}\label{e:1.2}
\sum_{i=0}^{n-k}\begin{bmatrix}t\\t-i\end{bmatrix}_{r}\begin{bmatrix}n\\k+i\end{bmatrix}_{t}=\begin{bmatrix}n\\k\end{bmatrix}_r
\end{equation}
and
\begin{equation}\label{e:1.3}
\sum_{i=0}^{n-k}\begin{Bmatrix}t+i\\t\end{Bmatrix}_{r}\begin{Bmatrix}n-i\\k\end{Bmatrix}_{t+1}=\begin{Bmatrix}n\\k\end{Bmatrix}_r .
\end{equation}
In this paper, these identities are very special cases of more general identities 
involving $r$-Whitney numbers of both kind.
As far as we know, these general identities are new.

\section{Proof of Theorem \ref{T:1}}

According to \cite{Mac95}, we have
$$\sum_{k\geq 0} e_k(x_1,\ldots,x_n) z^k=\prod_{i=1}^{n}(1+x_iz)$$ 
and
$$\sum_{k\geq 0} h_k(x_1,\ldots,x_n) z^k=\prod_{i=1}^{n}(1-x_iz)^{-1}\ .$$ 
Then, using the well-known Cauchy products of two power series, we can write
\begin{align*}
&\sum_{k\geq 0} e_k(x_1,\ldots,x_n,y_1,\ldots,y_t) z^k\\
&\qquad= \prod_{i=1}^{n}(1+x_iz)\prod_{i=1}^{t}(1+ y_iz)\\
&\qquad=\left(\sum_{k\geq 0} e_k(x_1,\ldots,x_n) z^k \right) \left(\sum_{k\geq 0} e_k(y_1,\ldots,y_t) z^k \right) \\
&\qquad= \sum_{i,j\geq 0} e_i(x_1,\ldots,x_n)e_j(y_1,\ldots,y_t) z^{i+j}\ 
\end{align*} 
and
\begin{align*}
&\sum_{k\geq 0} h_k(x_1,\ldots,x_n,y_1,\ldots,y_t) z^k\\
&\qquad= \prod_{i=1}^{n}(1-x_iz)^{-1}\prod_{i=1}^{t}(1-y_iz)^{-1}\\
&\qquad=\left(\sum_{k\geq 0} h_k(x_1,\ldots,x_n) z^k \right) \left(\sum_{k\geq 0} h_k(y_1,\ldots,y_t) z^k \right) \\
&\qquad= \sum_{i,j\geq 0} h_i(x_1,\ldots,x_n)h_j(y_1,\ldots,y_t) z^{i+j}\ .
\end{align*} 
Extracting coefficients of $z^k$ we get
\begin{equation}\label{eq:2.1}
f_k(x_1,\ldots,x_n,y_1,\ldots,y_t ) = \sum_{i=0}^{k}f_{k-i}(x_1,\ldots,x_n)f_{i}\left(y_1,\ldots,y_t \right)\ ,
\end{equation}
where $f$ is any of these complete or elementary symmetric functions.

We proceed to prove Theorem \ref{T:1} by successive application of the relation \eqref{eq:2.1}. 
For $1\le i \le m$, we denote by $B_i$ the set $\left\lbrace x_{a_{i-1}+1},\ldots,x_{a_i}\right\rbrace $.
It is clear that $\{B_i\}_{1\le i \le m}$ is a set partition of $\{x_1,x_2,\ldots,x_n\}$.
Thus, using the notation
$$f(B_i)=f(x_{a_{i-1}+1},\ldots,x_{a_i})\ ,$$
we can write
\begin{align*}
&f(x_1,\ldots,x_n)=f_k(B_1\cup\cdots\cup B_{m}) \\
&= \sum_{k_m=0}^{k}f_{k-k_m}(B_1\cup\cdots\cup B_{m-1})f_{k_m}(B_m)\\
&= \sum_{k_m=0}^{k}\sum_{k_{m-1}=0}^{k-k_m}f_{k-k_m-k_{m-1}}(B_1\cup\cdots\cup B_{m-2})f_{k_{m-1}}(B_{m-1})f_{k_m}(B_m)\ .
\end{align*}
Therefore, we conclude that
\begin{align*}
&f_k(B_1\cup\cdots\cup B_{m}) \\
&= \sum_{k_m=0}^{k}\sum_{k_{m-1}=0}^{k-k_m}\cdots \sum_{k_2=0}^{k-k_m-\cdots- k_3}f_{k-k_m-\cdots-k_2}(B_1)f_{k_2}(B_2)\cdots f_{k_m}(B_m)\\
&=\sum_{k_1+k_2+\cdots+k_m=k}f_{k_1}(B_1)f_{k_2}(B_2)\cdots f_{k_m}(B_m)
\end{align*}
and Theorem \ref{T:1} is proved.

\section{r-Stirling and r-Whitney numbers}

The $r$-Stirling numbers were introduced into the literature by Broder \cite{Broder84} in 1984 and represent a certain generalization of the classical Stirling numbers. 
The $r$-Stirling numbers of the first kind
$$\begin{bmatrix}n\\k\end{bmatrix}_r$$
count restricted permutations and are defined, for any positive integer $r$, as the number of permutations of the set $\{1,2,\ldots,n\}$ that have $k$ cycles such that the numbers $1,2,\ldots,r$ are in distinct cycles. 
The $r$-Stirling numbers of the second kind
$$\begin{Bmatrix}n\\k\end{Bmatrix}_r$$
are defined as the number of partitions of the set $\{1,2,\ldots,n\}$ into $k$ non-empty disjoint subsets, such that the numbers $1,2,\ldots,r$ are in distinct subsets.
It is clear that the case $r=1$ gives the classical unsigned Stirling numbers.
Many properties of $r$-Stirling numbers are presented in \cite{Kuba10,Mezo08,Mezo08a}.

According to \cite{Broder84},  the $r$-Stirling numbers of the first kind are the elementary symmetric functions of the numbers $r,\ldots,n$, i.e.,
$$
\begin{bmatrix}n+1\\n+1-k\end{bmatrix}_r=e_k(r,\ldots,n)\ 
$$
and the $r$-Stirling numbers of the second kind are the complete homogeneous symmetric functions of the numbers $r,\ldots,n$, i.e.,
$$
\begin{Bmatrix}n+k\\n\end{Bmatrix}_r=h_k(r,\ldots,n)\ .
$$

In 1973, Dowling \cite{Dowling73} constructed and studied a class of geometric lattices over a finite group $G$ of order $p>0$. For further information on lattices, see \cite{Beno96,Dowling73,Stanley97}.
The Whitney numbers of the first kind of Dowling lattices, $w_p(n,k)$, are given by
\begin{equation}\label{eq:3.1}
p^n\left(x\right)_n= \sum_{k=0}^{n}w_p(n,k)(px+1)^k
\end{equation}
and the Whitney numbers of the second kind, $W_p(n,k)$, are given by
\begin{equation*}\label{eq:4.2}
(px+1)^n=\sum_{k=0}^{n}p^k W_p(n,k)\left(x\right)_k\ ,
\end{equation*}
where $(x)_n$ is the falling factorial, i.e.,
$$(x)_n=x(x-1)\cdots(x-n+1)\ ,$$
with $(x)_0=1$. Many properties of Whitney numbers and their combinatorial interpretations can be seen in \cite{Beno96,Beno97,Beno99,Dowling73}.
By \eqref{eq:3.1}, we deduce that
$$w_{p}(n+1,n+1-k)=(-1)^ke_k(1,p+1,2p+1,\ldots,np+1)$$
and according to Benoumhani \cite[Corollary 4]{Beno96}, we have
$$W_{p}(n+k,n)=h_k(1,p+1,2p+1,\ldots,np+1)\ .$$

The $r$-Whitney numbers were introduced in 2010 by Mez\H{o} \cite{Mezo10} as a new class of numbers generalizing the $r$-Stirling and Whitney numbers. According to \cite{Cheon12,Mezo10}, the $n$th power of $px+r$ can be expressed in terms of the falling factorial as follows
\begin{equation*}
(px+r)^n=\sum_{k=0}^{n}p^{k}W_{p,r}(n,k)(x)_k\ ,
\end{equation*}
where the coefficients $W_{p,r}(n,k)$ are called $r$-Whitney numbers of the second kind. The $r$-Whitney numbers of the first kind are the coefficients of $(px+r)^k$ in the reverse relation
\begin{equation*}
p^n(x)_n=\sum_{k=0}^{n}w_{p,r}(n,k)(px+r)^k\ .
\end{equation*}
It is clear that the case $r=1$ gives the Whitney numbers of Dowling lattices. 
On the other hand, it is an easy exercise to show that 
$$w_{p,r}(n+1,n+1-k)=(-1)^ke_k(r,p+r,2p+r,\ldots,np+r)$$
and
$$W_{p,r}(n+k,n)=h_k(r,p+r,2p+r,\ldots,np+r)\ .$$
Therefore, the following two results are immediate from Theorem \ref{T:1}.

\begin{corollary}\label{C3.1}
	Let $k$, $m$, $n$, $p$ and $r$ be five positive integers and let $[\lambda_1,\lambda_2,\ldots,\lambda_m]$ be a composition of $n$. Then
	$$w_{p,r}(n,n-k)=\sum_{k_1+\cdots+k_m=k}\prod_{i=1}^{m}w_{p,a_{i-1}p+r}(\lambda_i,\lambda_i-k_i)\ ,$$
	where 
	$$a_0=0 \qquad\mbox{and}\qquad a_i=\lambda_1+\lambda_2+\cdots+\lambda_i\ .$$
\end{corollary}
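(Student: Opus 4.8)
The plan is to reduce the statement to Theorem~\ref{T:1} for the elementary symmetric functions, using the representation of the $r$-Whitney numbers of the first kind recorded just above the corollary. Applying the identity $w_{p,r}(n+1,n+1-k)=(-1)^k e_k(r,p+r,\ldots,np+r)$ with $n$ replaced by $N-1$ gives
\[
w_{p,r}(N,N-k)=(-1)^k e_k(r,p+r,\ldots,(N-1)p+r),
\]
valid for every $N$. Accordingly, I would introduce the $n$ specialized variables $x_j=(j-1)p+r$ for $j=1,\ldots,n$, so that $e_k(x_1,\ldots,x_n)=(-1)^k w_{p,r}(n,n-k)$ is the evaluation attached to the left-hand side of the desired identity.

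With this choice I would apply Theorem~\ref{T:1} with $f=e$ to the composition $[\lambda_1,\ldots,\lambda_m]$ of $n$, which expands $e_k(x_1,\ldots,x_n)$ as a sum over $k_1+\cdots+k_m=k$ of the products $\prod_{i=1}^m e_{k_i}(x_{a_{i-1}+1},\ldots,x_{a_i})$. The crux of the argument is to identify each block of variables. Writing $j=a_{i-1}+l$ for $l=1,\ldots,\lambda_i$ gives $x_j=(l-1)p+(a_{i-1}p+r)$, so the $i$th block is exactly the arithmetic progression with first term $a_{i-1}p+r$, common difference $p$, and $\lambda_i$ terms. Applying the representation above with $r$ replaced by the block's starting value $a_{i-1}p+r$ and $N$ replaced by $\lambda_i$ then yields $e_{k_i}(x_{a_{i-1}+1},\ldots,x_{a_i})=(-1)^{k_i}w_{p,a_{i-1}p+r}(\lambda_i,\lambda_i-k_i)$.

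To finish, I would substitute these two evaluations back into the expansion and collect the signs. On the summation side $\prod_{i=1}^m(-1)^{k_i}=(-1)^{k_1+\cdots+k_m}=(-1)^k$, while the left-hand side carries the same factor $(-1)^k$; the two powers of $-1$ cancel, leaving precisely the claimed convolution for $w_{p,r}(n,n-k)$.

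I expect no serious obstacle. The only delicate point is the index bookkeeping in the middle step, namely verifying that the reindexing $j\mapsto a_{i-1}+l$ turns the $i$th block into an arithmetic progression whose first term is $a_{i-1}p+r$, since this is exactly what forces the shifted second parameter $a_{i-1}p+r$ in $w_{p,a_{i-1}p+r}$. Everything else is a direct specialization of Theorem~\ref{T:1} together with a routine sign count.
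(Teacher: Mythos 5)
Your proposal is correct and follows exactly the paper's own route: specialize Theorem~\ref{T:1} for $f=e$ at the variables $x_j=(j-1)p+r$, recognize each block $x_{a_{i-1}+1},\ldots,x_{a_i}$ as the arithmetic progression with first term $a_{i-1}p+r$ and common difference $p$ so that $e_{k_i}$ of the block equals $(-1)^{k_i}w_{p,a_{i-1}p+r}(\lambda_i,\lambda_i-k_i)$, and cancel the sign $(-1)^k$ on both sides. The only difference is presentational: the paper states just the block computation and cites Theorem~\ref{T:1}, while you also spell out the left-hand-side evaluation and the sign bookkeeping explicitly.
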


\begin{proof}
	We have
	\begin{align*}
	& e_{k_i}(a_{i-1}p+r,(a_{i-1}+1)p+r,\ldots,(a_i-1)p+r) \\
	&\qquad = e_{k_i}(a_{i-1}p+r,a_{i-1}p+r+p,\ldots,a_{i-1}p+r+(a_i-a_{i-1}-1)p) \\
	&\qquad = (-1)^{k_i}w_{p,a_{i-1}p+r}(a_{i}-a_{i-1},a_i-a_{i-1}-k_i)\\
	&\qquad = (-1)^{k_i}w_{p,a_{i-1}p+r}(\lambda_{i},\lambda_i-k_i)\ .
	\end{align*}
	According to Theorem \ref{T:1}, the corollary is proved.
\end{proof}

\begin{corollary}\label{C3.2}
	Let $k$, $m$, $n$, $p$ and $r$ be five positive integers and let $[\lambda_1,\lambda_2,\ldots,\lambda_m]$ be a composition of $n+1$. Then
	$$W_{p,r}(n+k,n)=\sum_{k_1+\cdots+k_m=k}\prod_{i=1}^{m}W_{p,a_{i-1}p+r}(\lambda_i+k_i-1,\lambda_i-1)\ ,$$
	where 
	$$a_0=0 \qquad\mbox{and}\qquad a_i=\lambda_1+\lambda_2+\cdots+\lambda_i\ .$$
\end{corollary}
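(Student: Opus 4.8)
The plan is to mirror the proof of Corollary \ref{C3.1}, but working with the complete homogeneous symmetric functions and the $r$-Whitney numbers of the second kind in place of the elementary functions and the first kind. The starting point is the identity $W_{p,r}(n+k,n)=h_k(r,p+r,2p+r,\ldots,np+r)$ recorded above; note that the argument list $r,p+r,\ldots,np+r$ consists of exactly $n+1$ numbers, which is precisely why $[\lambda_1,\ldots,\lambda_m]$ is taken here to be a composition of $n+1$ rather than of $n$.

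First I would set $x_j=(j-1)p+r$ for $j=1,\ldots,n+1$, so that $h_k(x_1,\ldots,x_{n+1})=W_{p,r}(n+k,n)$ and $a_m=\lambda_1+\cdots+\lambda_m=n+1$. Applying Theorem \ref{T:1} with $f=h$ to these $n+1$ variables and the given composition yields
$$h_k(x_1,\ldots,x_{n+1})=\sum_{k_1+\cdots+k_m=k}\prod_{i=1}^{m}h_{k_i}(x_{a_{i-1}+1},\ldots,x_{a_i})\ .$$
The remaining task is to identify each factor on the right with an $r$-Whitney number. The block $x_{a_{i-1}+1},\ldots,x_{a_i}$ equals $a_{i-1}p+r,(a_{i-1}+1)p+r,\ldots,(a_i-1)p+r$, namely a run of $\lambda_i$ values in arithmetic progression with common difference $p$ and initial term $a_{i-1}p+r$. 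Reading this as the defining argument list for the second-kind formula with base $a_{i-1}p+r$ and largest multiplier $\lambda_i-1$, I would obtain
$$h_{k_i}(x_{a_{i-1}+1},\ldots,x_{a_i})=W_{p,\,a_{i-1}p+r}(\lambda_i+k_i-1,\lambda_i-1)\ ,$$
which is exactly the factor appearing in the statement.

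The argument is entirely formal once these identifications are in place. The only point demanding care is the index arithmetic: verifying that the block of length $\lambda_i$ beginning at $a_{i-1}p+r$ terminates at $(a_i-1)p+r$, so that setting the number of multipliers equal to $\lambda_i-1$ produces the second-kind parameters $\lambda_i+k_i-1$ and $\lambda_i-1$ rather than $\lambda_i+k_i$ and $\lambda_i$. This is the same off-by-one bookkeeping as in Corollary \ref{C3.1}, shifted by the passage from $n$ to $n+1$ variables; I expect it to be the main (and only real) obstacle, and a routine one at that.
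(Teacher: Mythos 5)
Your proposal is correct and follows essentially the same route as the paper: apply Theorem \ref{T:1} with $f=h$ to the $n+1$ variables $r,p+r,\ldots,np+r$, then recognize each block $a_{i-1}p+r,\,a_{i-1}p+r+p,\ldots,\,a_{i-1}p+r+(\lambda_i-1)p$ as the argument list for $W_{p,\,a_{i-1}p+r}(\lambda_i+k_i-1,\lambda_i-1)$ via the identity $W_{p,r}(n+k,n)=h_k(r,p+r,\ldots,np+r)$. The index bookkeeping you flag as the only delicate point is handled identically in the paper's proof, and your verification of it is accurate.
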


\begin{proof}
	We have
	\begin{align*}
	& h_{k_i}(a_{i-1}p+r,(a_{i-1}+1)p+r,\ldots,(a_i-1)p+r) \\
	&\qquad = h_{k_i}(a_{i-1}p+r,a_{i-1}p+r+p,\ldots,a_{i-1}p+r+(a_i-a_{i-1}-1)p) \\
	&\qquad = W_{p,a_{i-1}p+r}(a_{i}-a_{i-1}-1+k_i,a_i-a_{i-1}-1)\\
	&\qquad = W_{p,a_{i-1}p+r}(\lambda_{i}-1+k_i,\lambda_i-1)\ .
	\end{align*}
	According to Theorem \ref{T:1}, the corollary is proved.
\end{proof}

We remark that, the case $m=2$ in Corollaries \ref{C3.1} and \ref{C3.2} can be written as

\begin{corollary}\label{C:3.3}
	Let $k$, $n$, $p$, $r$  and $t$ be five positive integers. Then
	$$w_{p,r}(n,k)=\sum_{i=0}^{n-k}w_{p,r}(t,t-i)w_{p,tp+r}(n-t,k-t+i).$$
\end{corollary}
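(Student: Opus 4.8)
The plan is to obtain this identity as the specialization $m=2$ of Corollary \ref{C3.1}. Since the present statement is already advertised as exactly that case, no idea beyond Theorem \ref{T:1} is required: the entire task reduces to choosing the right two-part composition and matching the indices.

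First I would apply Corollary \ref{C3.1} to the two-part composition $[\lambda_1,\lambda_2]=[t,n-t]$ of $n$, which is legitimate provided $1\le t\le n-1$. Here $m=2$, $a_0=0$ and $a_1=\lambda_1=t$, so $a_0p+r=r$ and $a_1p+r=tp+r$, and the single product collapses to
\begin{equation*}
w_{p,r}(n,n-k)=\sum_{k_1+k_2=k}w_{p,r}(t,t-k_1)\,w_{p,tp+r}(n-t,(n-t)-k_2)\ .
\end{equation*}

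Next I would pass from the ``$n-k$'' convention in which Corollary \ref{C3.1} is phrased to the ``$k$'' convention of the present statement by replacing $k$ with $n-k$ throughout; the summation then runs over $k_1+k_2=n-k$, and the left-hand side becomes $w_{p,r}(n,k)$. Setting $i=k_1$ gives $k_2=(n-k)-i$, whence the second lower argument simplifies to $(n-t)-k_2=k-t+i$, and the constraint $0\le k_1\le n-k$ forces $i$ to range from $0$ to $n-k$. Substituting these reindexed terms back yields
\begin{equation*}
w_{p,r}(n,k)=\sum_{i=0}^{n-k}w_{p,r}(t,t-i)\,w_{p,tp+r}(n-t,k-t+i)\ ,
\end{equation*}
which is the claim.

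The only real obstacle is this index bookkeeping: one must keep straight the two competing conventions and verify that the shift $a_1=t$ feeds correctly into the second $r$-parameter to produce $tp+r$, while the second falling-factorial length $n-t$ and the shifted argument $k-t+i$ emerge in the right places. None of this is deep, but it is where an off-by-one error would most easily creep in, so I would carry out the substitution $k\mapsto n-k$ and the reindexing $i=k_1$ explicitly rather than by inspection.
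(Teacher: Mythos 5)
Your proof is correct and is exactly the argument the paper intends: the paper simply remarks that Corollary \ref{C:3.3} is the case $m=2$ of Corollary \ref{C3.1}, and your explicit choice of the composition $[t,n-t]$, the substitution $k\mapsto n-k$, and the reindexing $i=k_1$ carry out that specialization faithfully. No gaps; your aside that one needs $1\le t\le n-1$ for $[t,n-t]$ to be a genuine composition is a reasonable (implicit in the paper) hypothesis.
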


\begin{corollary}\label{C:3.4}
	Let $k$, $n$, $p$, $r$  and $t$ be five positive integers. Then
	$$W_{p,r}(n,k)=\sum_{i=0}^{n-k}W_{p,r}(t-1+i,t-1)W_{p,tp+r}(n-t-i,k-t).$$
\end{corollary}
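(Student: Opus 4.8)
The plan is to obtain this identity as the specialization $m=2$ of Corollary \ref{C3.2}, the route suggested by the remark preceding the statement; equivalently, one can read it off directly from the two-block splitting \eqref{eq:2.1}. I would follow the direct route, since it makes the index bookkeeping transparent, and then note agreement with Corollary \ref{C3.2}.

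First I would pass from $r$-Whitney numbers to complete homogeneous symmetric functions via the identity $W_{p,r}(N+K,N)=h_K(r,p+r,\ldots,Np+r)$ recorded before Corollary \ref{C3.1}. Reading the left-hand side $W_{p,r}(n,k)$ with $N=k$ and $N+K=n$, so that $K=n-k$, gives
$$W_{p,r}(n,k)=h_{n-k}(r,p+r,2p+r,\ldots,kp+r),$$
that is, $h_{n-k}$ evaluated on the $k+1$ numbers $r+jp$ for $j=0,1,\ldots,k$. Next I would split this set of variables at position $t$ into the block $\{r+jp:0\le j\le t-1\}$ of size $t$ and the block $\{r+jp:t\le j\le k\}$ of size $k-t+1$, and apply \eqref{eq:2.1} with $f=h$ and degree $n-k$. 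Writing the convolution so that the summation index $i$ counts the degree taken on the first block yields
$$h_{n-k}(r,\ldots,kp+r)=\sum_{i=0}^{n-k}h_i(r,\ldots,(t-1)p+r)\,h_{n-k-i}(tp+r,\ldots,kp+r).$$

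It then remains to translate each factor back into an $r$-Whitney number. The first block consists of the $t$ numbers $r+jp$ with $0\le j\le t-1$, so $h_i$ on it equals $W_{p,r}(t-1+i,t-1)$; the second block consists of the $k-t+1$ numbers $(tp+r)+jp$ with $0\le j\le k-t$, so $h_{n-k-i}$ on it equals $W_{p,tp+r}(n-t-i,k-t)$. Substituting these two evaluations gives exactly the claimed identity.

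The computation is entirely routine once the encodings are fixed; the only point that requires care, and the main potential for error, is the simultaneous bookkeeping of the two index conventions. One must correctly read $W_{p,r}(n,k)$ as $h_{n-k}$ rather than $h_k$, choose the splitting point so that the first block has $t$ (not $t-1$) elements, and orient the convolution index $i$ toward the first block so that the exponents match $W_{p,r}(t-1+i,t-1)$ and $W_{p,tp+r}(n-t-i,k-t)$. Checking that $i$ ranges over $0\le i\le n-k$ and that the natural constraints read $1\le t\le k$ confirms that the sum is precisely the one in the statement, and comparison with the $m=2$ instance of Corollary \ref{C3.2}, taken for the composition $[t,k+1-t]$ of $k+1$, produces the same identity.
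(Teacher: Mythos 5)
Your proposal is correct and is essentially the paper's own argument: the paper obtains Corollary \ref{C:3.4} as the $m=2$ case of Corollary \ref{C3.2}, whose proof is exactly your translation $W_{p,r}(N+K,N)=h_K(r,p+r,\ldots,Np+r)$ combined with the convolution \eqref{eq:2.1}, so your ``direct route'' merely inlines that specialization. Your index bookkeeping (first block of $t$ variables, second block with base $tp+r$, degree $n-k$ split as $i$ and $n-k-i$) checks out and reproduces the stated identity exactly.
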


We note that the $r$-Whitney numbers of both kinds may be
reduced to the $r$-Stirling numbers of both kinds by setting $p = 1$, i.e.,
$$w_{1,r}(n,n-k)=(-1)^k\begin{bmatrix}n+r\\n+r-k\end{bmatrix}_r$$
and
$$W_{1,r}(n+k,n)=\begin{Bmatrix}n+r+k\\n+r\end{Bmatrix}_r\ .$$
Thus, the case $p=1$ in Corollaries \ref{C3.1} and \ref{C3.2} can be written as

\begin{corollary}\label{C:3.5}
	Let $k$, $m$, $n$ and $r$ be four positive integers such that $r<n$ and let $[\lambda_1,\lambda_2,\ldots,\lambda_m]$ be a composition of $n-r$. Then
	$$\begin{bmatrix}n\\n-k\end{bmatrix}_r=\sum_{k_1+\cdots+k_m=k}\prod_{i=1}^{m}\begin{bmatrix}r+a_i\\r+a_i-k_i\end{bmatrix}_{r+a_{i-1}}\ ,$$
	where 
	$$a_0=0 \qquad\mbox{and}\qquad a_i=\lambda_1+\lambda_2+\cdots+\lambda_i\ .$$
\end{corollary}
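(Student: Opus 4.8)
The plan is to obtain this identity as the $p=1$ specialization of Corollary \ref{C3.1}, followed by translating every $r$-Whitney number of the first kind into an $r$-Stirling number of the first kind via the stated conversion
$$w_{1,s}(N,N-K)=(-1)^K\begin{bmatrix}N+s\\N+s-K\end{bmatrix}_s\ .$$
Setting $p=1$ in Corollary \ref{C3.1}, with the composition $[\lambda_1,\ldots,\lambda_m]$ now taken to sum to $n-r$ (so that $a_m=n-r$), the factor index $a_{i-1}p+r$ collapses to $a_{i-1}+r$, and one arrives at
$$w_{1,r}(n-r,n-r-k)=\sum_{k_1+\cdots+k_m=k}\prod_{i=1}^{m}w_{1,a_{i-1}+r}(\lambda_i,\lambda_i-k_i)\ .$$

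First I would rewrite the left-hand side. Taking $N=n-r$, $K=k$ and $s=r$ in the conversion formula turns $w_{1,r}(n-r,n-r-k)$ into $(-1)^k\begin{bmatrix}n\\n-k\end{bmatrix}_r$, which is precisely the quantity appearing on the left of the claimed identity. Next I would treat each factor on the right-hand side: applying the conversion with $N=\lambda_i$, $K=k_i$ and $s=a_{i-1}+r$, and invoking the defining relation $\lambda_i+a_{i-1}=a_i$ to simplify the top entry $\lambda_i+a_{i-1}+r=a_i+r$, each factor becomes
$$w_{1,a_{i-1}+r}(\lambda_i,\lambda_i-k_i)=(-1)^{k_i}\begin{bmatrix}r+a_i\\r+a_i-k_i\end{bmatrix}_{r+a_{i-1}}\ .$$

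Finally I would account for the signs. Since $k_1+k_2+\cdots+k_m=k$, the accumulated sign on the right is $\prod_{i=1}^{m}(-1)^{k_i}=(-1)^{k}$, which exactly matches the factor $(-1)^k$ produced on the left. Cancelling this common sign from both sides then yields the stated formula. I do not expect a genuine conceptual obstacle here, since the result is a direct specialization of Corollary \ref{C3.1}; the only point demanding care is the index bookkeeping in the conversion formula, namely checking that the shifted parameter $a_{i-1}+r$ of the $i$th factor correctly becomes the subscript $r+a_{i-1}$ while the top entry is $r+a_i$, and confirming the condition $r<n$ guarantees that $n-r$ is a positive integer admitting such a composition.
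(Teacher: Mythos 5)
Your proposal is correct and follows exactly the paper's route: the paper obtains Corollary \ref{C:3.5} precisely as the $p=1$ specialization of Corollary \ref{C3.1} combined with the conversion $w_{1,r}(n,n-k)=(-1)^k\begin{bmatrix}n+r\\n+r-k\end{bmatrix}_r$, which is what you carry out. Your write-up merely makes explicit the index substitutions and the sign cancellation $\prod_i(-1)^{k_i}=(-1)^k$ that the paper leaves to the reader.
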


\begin{corollary}\label{C:3.6}
	Let $k$, $m$, $n$ and $r$ be four positive integers such that $r\le n$ and let $[\lambda_1,\lambda_2,\ldots,\lambda_m]$ be a composition of $n+1-r$. Then
	$$\begin{Bmatrix}n+k\\n\end{Bmatrix}_r=\sum_{k_1+\cdots+k_m=k}\prod_{i=1}^{m}\begin{Bmatrix}r+a_i-1+k_i\\r+a_i-1\end{Bmatrix}_{r+a_{i-1}}\ ,$$
	where 
	$$a_0=0 \qquad\mbox{and}\qquad a_i=\lambda_1+\lambda_2+\cdots+\lambda_i\ .$$
\end{corollary}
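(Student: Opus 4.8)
The plan is to derive this corollary directly from Corollary \ref{C3.2} by setting $p=1$ and translating the $r$-Whitney numbers into $r$-Stirling numbers of the second kind through the dictionary
$$W_{1,r}(N+k,N)=\begin{Bmatrix}N+r+k\\N+r\end{Bmatrix}_r$$
recorded just above. Since the left-hand side here is $\begin{Bmatrix}n+k\\n\end{Bmatrix}_r$, I would invoke Corollary \ref{C3.2} not with its own index $n$ but with $N=n-r$ in place of it; the hypothesis $r\le n$ guarantees $N\ge 0$, and the composition $[\lambda_1,\ldots,\lambda_m]$ of $N+1=n+1-r$ is precisely the composition assumed in the present statement. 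With $p=1$, the left-hand side of Corollary \ref{C3.2} becomes $W_{1,r}(N+k,N)=\begin{Bmatrix}N+r+k\\N+r\end{Bmatrix}_r=\begin{Bmatrix}n+k\\n\end{Bmatrix}_r$, as required.

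The second step is to rewrite each factor on the right. After setting $p=1$ in Corollary \ref{C3.2}, a typical factor is $W_{1,\,a_{i-1}+r}(\lambda_i+k_i-1,\lambda_i-1)$. I would apply the dictionary with subscript $r'=a_{i-1}+r$, lower argument $N'=\lambda_i-1$, and total $N'+k'=\lambda_i+k_i-1$ (so $k'=k_i$), obtaining
$$W_{1,\,a_{i-1}+r}(\lambda_i+k_i-1,\lambda_i-1)=\begin{Bmatrix}(\lambda_i-1)+(a_{i-1}+r)+k_i\\(\lambda_i-1)+(a_{i-1}+r)\end{Bmatrix}_{a_{i-1}+r}.$$
The key simplification is the defining recurrence $a_i=a_{i-1}+\lambda_i$ for the partial sums, which yields $a_{i-1}+\lambda_i-1=a_i-1$. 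Substituting this collapses the upper and lower arguments to $r+a_i-1+k_i$ and $r+a_i-1$ respectively, while the subscript is $r+a_{i-1}$; this matches the summand in the corollary termwise.

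I expect the only real care to lie in the index bookkeeping: keeping the corollary's $n$ distinct from the specialized $N=n-r$, tracking which slot of $W_{p,r}(\cdot,\cdot)$ plays the role of subscript versus arguments, and confirming that the single algebraic identity $a_{i-1}+\lambda_i-1=a_i-1$ aligns all three symbols simultaneously. There is also a minor boundary point in the positivity conventions (for instance the degenerate case $r=n$, giving $N=0$ and forcing $m=1$, $\lambda_1=1$), which is dispatched by direct inspection or by noting that the $r$-Stirling symbols involved remain well defined under the conventions fixed for $e_k$ and $h_k$. Once the termwise identification is verified, the summation index set $k_1+\cdots+k_m=k$ transfers verbatim and the identity follows.
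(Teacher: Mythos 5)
Your proposal is correct and follows exactly the paper's route: the paper obtains Corollary \ref{C:3.6} precisely as the case $p=1$ of Corollary \ref{C3.2}, translated through the stated reduction $W_{1,r}(n+k,n)=\begin{Bmatrix}n+r+k\\n+r\end{Bmatrix}_r$, which is the substitution you carry out (with the shift $N=n-r$ and the collapse $a_{i-1}+\lambda_i-1=a_i-1$). Your explicit treatment of the boundary case $r=n$ is if anything slightly more careful than the paper, which states the specialization without detail.
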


Now, it is an easy exercise to derive the following two convolution identities:
\begin{equation*}
\begin{bmatrix}r+m\cdot n\\r+m\cdot n-k\end{bmatrix}_r=\sum_{k_1+\cdots+k_m=k}\prod_{i=1}^{m}\begin{bmatrix}r+i\cdot n\\r+i\cdot n-k_i\end{bmatrix}_{r+(i-1)n}\ 
\end{equation*}
and
\begin{equation*}
\begin{Bmatrix}r+m\cdot n+k\\r+m\cdot n\end{Bmatrix}_{r+1} =\sum_{k_1+\cdots+k_m=k}\prod_{i=1}^{m}\begin{Bmatrix}r+i\cdot n+k_i\\r+i\cdot n\end{Bmatrix}_{r+1+(i-1)n}\ .
\end{equation*}

For $m=2$, by Corollaries \ref{C:3.5} and \ref{C:3.6}, we obtain the convolution identities \eqref{e:1.2} and \eqref{e:1.3}. 



\ 



\section{Some quantum convolution identities}

The $q$-binomial coefficients are $q$-analogs of the binomial coefficients and are defined by
$$\binom{n}{k}_q=
\begin{cases}
\dfrac{(q;q)_n}{(q;q)_k(q;q)_{n-k}}, & \mbox{for $k\in\{0,\dots,n\}$,}\\
0,& \mbox{otherwise,}\\
\end{cases}
$$
where
$$(a;q)_n=(1-a)(1-aq)\cdots(1-aq^{n-1})$$
is $q$-shifted factorial, with $(a;q)_0=1$.

\begin{corollary}\label{C:4.1}
	Let $k$, $m$ and $n$ be three positive integers and let $[\lambda_1,\lambda_2,\ldots,\lambda_m]$ be a composition of $n$. Then
	\begin{equation*}
	\sum_{k_1+k_2+\cdots+k_m=k}q^{a_1k_2+\cdots+a_{m-1}k_m-e_2(k_1,\ldots,k_m)}\prod_{i=1}^{m}\binom{\lambda_i}{k_i}_q=\binom{n}{k}_q,
	\end{equation*}
	where
	$$a_i=\lambda_1+\lambda_2+\cdots+\lambda_i\ .$$
\end{corollary}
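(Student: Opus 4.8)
The plan is to obtain this identity as the specialization of Theorem \ref{T:1} in which $f=e$ and the $n$ variables are the geometric progression $x_j=q^{j-1}$ for $j=1,\ldots,n$. The engine driving the whole argument is the classical evaluation
\begin{equation*}
e_k(1,q,q^2,\ldots,q^{N-1})=q^{\binom{k}{2}}\binom{N}{k}_q,
\end{equation*}
which follows from the finite $q$-binomial theorem $\prod_{i=0}^{N-1}(1+q^iz)=\sum_{k}q^{\binom{k}{2}}\binom{N}{k}_qz^k$ upon comparing with $\sum_k e_k(1,q,\ldots,q^{N-1})z^k=\prod_{i=0}^{N-1}(1+q^iz)$. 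Consequently the left-hand side of \eqref{e:1.1}, namely $e_k(1,q,\ldots,q^{n-1})$, already equals $q^{\binom{k}{2}}\binom{n}{k}_q$, that is, $q^{\binom{k}{2}}$ times the desired target $\binom{n}{k}_q$.

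Next I would evaluate each block factor appearing on the right-hand side of \eqref{e:1.1}. With $x_j=q^{j-1}$ the $i$-th block becomes
\begin{equation*}
\{x_{a_{i-1}+1},\ldots,x_{a_i}\}=\{q^{a_{i-1}},q^{a_{i-1}+1},\ldots,q^{a_i-1}\}=q^{a_{i-1}}\cdot\{1,q,\ldots,q^{\lambda_i-1}\},
\end{equation*}
a scaled progression of length $\lambda_i$. Since $e_{k_i}$ is homogeneous of degree $k_i$, I can pull the common factor $q^{a_{i-1}}$ out to the power $k_i$ and apply the evaluation above, giving
\begin{equation*}
e_{k_i}(x_{a_{i-1}+1},\ldots,x_{a_i})=q^{a_{i-1}k_i}\,q^{\binom{k_i}{2}}\binom{\lambda_i}{k_i}_q.
\end{equation*}

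Substituting these factors into \eqref{e:1.1} and collecting the exponent of $q$ over a fixed composition $(k_1,\ldots,k_m)$ of $k$ produces the total power $\sum_{i=1}^m a_{i-1}k_i+\sum_{i=1}^m\binom{k_i}{2}$. Because $a_0=0$, the first sum is exactly $a_1k_2+\cdots+a_{m-1}k_m$, already matching the corresponding part of the claimed exponent. The crux of the argument, and the only place any genuine bookkeeping is needed, is the second sum, where I would invoke the elementary identity
\begin{equation*}
\binom{k}{2}=\sum_{i=1}^m\binom{k_i}{2}+e_2(k_1,\ldots,k_m),\qquad k=k_1+\cdots+k_m,
\end{equation*}
which is just the expansion $k^2=\sum_i k_i^2+2\,e_2(k_1,\ldots,k_m)$ rewritten. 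This replaces $\sum_i\binom{k_i}{2}$ by $\binom{k}{2}-e_2(k_1,\ldots,k_m)$, so the factor $q^{\binom{k}{2}}$ occurs uniformly in every summand of the right-hand side and matches the $q^{\binom{k}{2}}$ already isolated on the left. Cancelling it throughout leaves precisely the stated identity, with the exponent $a_1k_2+\cdots+a_{m-1}k_m-e_2(k_1,\ldots,k_m)$. I expect no real obstacle beyond the care required to track the $q$-exponents; the substitution and the single quadratic identity do all the work.
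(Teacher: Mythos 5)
Your proposal is correct and follows essentially the same route as the paper's proof: specialize Theorem \ref{T:1} to $x_j=q^{j-1}$, evaluate each block via $e_{k_i}(q^{a_{i-1}},\ldots,q^{a_i-1})=q^{a_{i-1}k_i+\binom{k_i}{2}}\binom{\lambda_i}{k_i}_q$, and absorb the exponents using $\sum_i\binom{k_i}{2}=\binom{k}{2}-e_2(k_1,\ldots,k_m)$ before cancelling $q^{\binom{k}{2}}$. The only cosmetic difference is that you justify the block evaluation by homogeneity of $e_{k_i}$ while the paper quotes the shift relation $e_k(q^p,\ldots,q^n)=q^{pk}e_k(1,q,\ldots,q^{n-p})$, which is the same fact.
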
 

\begin{proof}
	To prove the corollary we use the well-known relation
	$$
	e_k(1,q,\ldots,q^{n-1})=q^{\binom{k}{2}}\binom{n}{k}_q\ .
	$$
	Taking into account that
	$$
	e_{k}(q^{p},\ldots,q^{n})=q^{pk}e_{k}(1,q,\ldots,q^{n-p})
	$$
	we can write
	\begin{eqnarray*}
		\prod_{i=1}^{m}e_{k_i}(q^{a_{i-1}},\ldots,q^{a_i-1})&=&\prod_{i=1}^{m}q^{a_{i-1}k_i}e_{k_i}(1,q,\ldots,q^{\lambda_i-1})\\
		&=&\prod_{i=1}^{m}q^{a_{i-1}k_i+\binom{k_i}{2}}\binom{\lambda_i}{k_i}_q\\
		&=&q^{a_0k_1+\cdots+a_{m-1}k_m+\binom{k}{2}-e_2(k_1,\ldots,k_m)}\prod_{i=1}^m\binom{\lambda_i}{k_i}_q\ ,
	\end{eqnarray*}
	with $a_0=0$.
	According to Theorem \ref{T:1}, the proof is finished.
\end{proof}

\begin{corollary}\label{C:4.2}
	Let $k$, $m$ and $n$ be three positive integers and let $[\lambda_1,\lambda_2,\ldots,\lambda_m]$ be a composition of $n$. Then
	\begin{equation*}
	\sum_{k_1+k_2+\cdots+k_m=k}q^{a_1k_2+\cdots+a_{m-1}k_m}\prod_{i=1}^{m}\binom{\lambda_i+k_i-1}{k_i}_q=\binom{n+k-1}{k}_q,
	\end{equation*}
	where
	$$a_i=\lambda_1+\lambda_2+\cdots+\lambda_i\ .$$
\end{corollary}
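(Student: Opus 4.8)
The plan is to derive Corollary \ref{C:4.2} as a specialization of Theorem \ref{T:1} for the complete homogeneous symmetric functions, exactly paralleling the proof of Corollary \ref{C:4.1} but with the $q$-analogue of the homogeneous case. The key fact I would invoke is the companion to the elementary identity used above, namely
\begin{equation*}
h_k(1,q,\ldots,q^{n-1})=\binom{n+k-1}{k}_q\ ,
\end{equation*}
which follows from the generating function $\sum_{k\ge 0}h_k(1,q,\ldots,q^{n-1})z^k=\prod_{i=0}^{n-1}(1-q^i z)^{-1}$ together with the $q$-binomial theorem. Notice that, unlike the elementary case, there is no $q^{\binom{k}{2}}$ prefactor here; this is what will make the final exponent purely $a_1k_2+\cdots+a_{m-1}k_m$ with no $e_2$ correction term.

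First I would record the homogeneity scaling rule for complete symmetric functions, $h_{k}(q^{p},\ldots,q^{n})=q^{pk}h_{k}(1,q,\ldots,q^{n-p})$, which is immediate since every monomial in $h_k$ has total degree $k$. Then I would compute the product appearing on the right side of Theorem \ref{T:1} when each block of variables is taken to be $(q^{a_{i-1}},\ldots,q^{a_i-1})$, a block of $\lambda_i=a_i-a_{i-1}$ consecutive powers of $q$. Applying the scaling rule to each factor gives
\begin{align*}
\prod_{i=1}^{m}h_{k_i}(q^{a_{i-1}},\ldots,q^{a_i-1})
&=\prod_{i=1}^{m}q^{a_{i-1}k_i}h_{k_i}(1,q,\ldots,q^{\lambda_i-1})\\
&=\prod_{i=1}^{m}q^{a_{i-1}k_i}\binom{\lambda_i+k_i-1}{k_i}_q\\
&=q^{a_0k_1+a_1k_2+\cdots+a_{m-1}k_m}\prod_{i=1}^{m}\binom{\lambda_i+k_i-1}{k_i}_q\ .
\end{align*}
Since $a_0=0$, the leading exponent term drops and the collected power of $q$ is precisely $a_1k_2+\cdots+a_{m-1}k_m$, matching the statement.

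Finally I would evaluate the left side of Theorem \ref{T:1} on the full variable set $(x_1,\ldots,x_n)=(1,q,\ldots,q^{n-1})$, which gives $h_k(1,q,\ldots,q^{n-1})=\binom{n+k-1}{k}_q$. Summing the product expression above over all $k_1+\cdots+k_m=k$ and equating with this evaluation yields the claimed identity. The substance here is entirely bookkeeping, so there is no real obstacle beyond correctly tracking the exponent of $q$ in the telescoping sum $\sum_i a_{i-1}k_i$; the conceptual weight is carried by Theorem \ref{T:1} and by the clean $q$-binomial evaluation of $h_k(1,q,\ldots,q^{n-1})$, which has no Gaussian $\binom{k}{2}$ twist and therefore produces the simpler exponent seen in Corollary \ref{C:4.2} relative to Corollary \ref{C:4.1}.
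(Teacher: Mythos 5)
Your proof is correct and follows exactly the paper's route: the paper's own (very terse) proof likewise cites the evaluation $h_k(1,q,\ldots,q^{n-1})=\binom{n+k-1}{k}_q$, the scaling rule $h_{k}(q^{p},\ldots,q^{n})=q^{pk}h_{k}(1,q,\ldots,q^{n-p})$, and Theorem~\ref{T:1}, leaving the exponent bookkeeping to the reader. You have simply carried out that bookkeeping explicitly, and your computation (including why no $e_2$ correction appears, in contrast to Corollary~\ref{C:4.1}) is accurate.
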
 

\begin{proof}
	To prove the corollary, we use the relations
	$$h_k(1,q,\ldots,q^{n-1})=\binom{n+k-1}{k}_q\ ,$$ $$h_{k}(q^{p},\ldots,q^{n})=q^{pk}h_{k}(1,q,\ldots,q^{n-p})$$
	and Theorem \ref{T:1}.
\end{proof}

Replacing $n$ by $m\cdot n$, and $[\lambda_1,\lambda_2,\ldots,\lambda_m]$ by $[n,n,\ldots,n]$ in Corollaries \ref{C:4.1} and \ref{C:4.2}, we obtain 

\begin{corollary}\label{C:4.3}
	Let $k$, $m$ and $n$ be three positive integers. Then
	\begin{equation*}
	\sum_{k_1+k_2+\cdots+k_m=k}\frac{(q^{k_2+2k_3+\cdots+(m-1)k_m})^n}{q^{e_2(k_1,\ldots,k_m)}}\prod_{i=1}^{m}\binom{n}{k_i}_q=\binom{m\cdot n}{k}_q\ .
	\end{equation*}
\end{corollary}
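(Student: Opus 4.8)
The plan is to obtain this identity as a direct specialization of Corollary \ref{C:4.1}; no new machinery is needed. First I would take the composition in Corollary \ref{C:4.1} to be $[n,n,\ldots,n]$ with $m$ parts, so that it is a composition of $m\cdot n$. Under this choice the integer playing the role of $n$ in Corollary \ref{C:4.1} equals $\lambda_1+\cdots+\lambda_m=m\cdot n$, and hence its right-hand side $\binom{n}{k}_q$ becomes $\binom{m\cdot n}{k}_q$, which is exactly the right-hand side we want.

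Next I would compute the partial sums $a_i=\lambda_1+\cdots+\lambda_i$ for this composition. Since every part equals $n$, one gets $a_i=i\cdot n$, so in particular $a_{i-1}=(i-1)n$. Substituting this into the exponent $a_1k_2+a_2k_3+\cdots+a_{m-1}k_m=\sum_{i=2}^{m}a_{i-1}k_i$ of Corollary \ref{C:4.1} yields $n\sum_{i=2}^{m}(i-1)k_i=n\bigl(k_2+2k_3+\cdots+(m-1)k_m\bigr)$. Pulling the factor $n$ out of the exponent and writing $q^{n\,x}=(q^{x})^{n}$ converts this into the factor $(q^{k_2+2k_3+\cdots+(m-1)k_m})^{n}$ appearing in the statement. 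The remaining exponential $q^{-e_2(k_1,\ldots,k_m)}$ from Corollary \ref{C:4.1} carries over verbatim and produces the denominator $q^{e_2(k_1,\ldots,k_m)}$.

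Finally, each binomial factor $\binom{\lambda_i}{k_i}_q$ reduces to $\binom{n}{k_i}_q$ because $\lambda_i=n$, and collecting all the pieces reproduces the claimed sum over $k_1+\cdots+k_m=k$. The single point deserving attention—though it is routine bookkeeping rather than a genuine obstacle—is the indexing of the exponent: one must confirm that the coefficient of $k_i$ is $a_{i-1}=(i-1)n$ and not $a_i=i\cdot n$, so that the linear form $k_2+2k_3+\cdots+(m-1)k_m$ emerges with precisely the coefficients $1,2,\ldots,m-1$.
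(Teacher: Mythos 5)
Your proposal is correct and coincides with the paper's own derivation, which obtains Corollary \ref{C:4.3} by exactly this specialization: replacing $n$ by $m\cdot n$ and taking $[\lambda_1,\ldots,\lambda_m]=[n,n,\ldots,n]$ in Corollary \ref{C:4.1}, so that $a_{i-1}=(i-1)n$ and the exponent becomes $n(k_2+2k_3+\cdots+(m-1)k_m)$. Your careful check that the coefficient of $k_i$ is $a_{i-1}$ rather than $a_i$ is the only detail the paper leaves implicit.
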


\begin{corollary}\label{C:4.4}
	Let $k$, $m$ and $n$ be three positive integers. Then
	\begin{equation*}
	\sum_{k_1+k_2+\cdots+k_m=k}(q^{k_2+2k_3+\cdots+(m-1)k_m})^n\prod_{i=1}^{m}\binom{n+k_i-1}{k_i}_q=\binom{m\cdot n+k-1}{k}_q\ .
	\end{equation*}
\end{corollary}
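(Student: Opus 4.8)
The plan is to derive this identity as a direct specialization of Corollary \ref{C:4.2}. First I would take the composition $[\lambda_1, \lambda_2, \ldots, \lambda_m]$ of $n$ appearing there and set every part equal to a common value, replacing each $\lambda_i$ by $n$ so that the composition becomes $[n, n, \ldots, n]$ (with $m$ parts) and the integer being partitioned becomes $m \cdot n$. Under this substitution the right-hand side $\binom{n+k-1}{k}_q$ of Corollary \ref{C:4.2} becomes $\binom{m \cdot n + k - 1}{k}_q$, which is precisely the claimed right-hand side.

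Next I would track the partial sums $a_i = \lambda_1 + \cdots + \lambda_i$. With all parts equal to $n$ these collapse to $a_i = i \cdot n$, so in particular $a_1 = n$, $a_2 = 2n$, \ldots, $a_{m-1} = (m-1)n$. Substituting these into the exponent $a_1 k_2 + a_2 k_3 + \cdots + a_{m-1} k_m$ gives $n k_2 + 2n k_3 + \cdots + (m-1) n k_m = n\bigl(k_2 + 2k_3 + \cdots + (m-1)k_m\bigr)$, and hence the prefactor $q^{a_1 k_2 + \cdots + a_{m-1} k_m}$ rewrites as $(q^{k_2 + 2k_3 + \cdots + (m-1)k_m})^n$, matching the factor in the statement. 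Simultaneously each $q$-binomial $\binom{\lambda_i + k_i - 1}{k_i}_q$ becomes $\binom{n + k_i - 1}{k_i}_q$, so the product $\prod_{i=1}^m$ takes the asserted form, while the summation condition $k_1 + \cdots + k_m = k$ is untouched.

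Since every piece of the summand and both sides transform correctly under the substitution, assembling them yields the identity. The only point requiring any care is the exponent arithmetic: one must verify that the telescoping of $a_i$ into $i \cdot n$ produces exactly the linear form $k_2 + 2k_3 + \cdots + (m-1)k_m$ inside the exponent, with the coefficient of $k_{j+1}$ equal to $j$ and no off-by-one shift. This is the whole content of the argument, so I expect no genuine obstacle beyond this bookkeeping; once the substitution is recorded the derivation is immediate.
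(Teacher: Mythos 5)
Your proposal is correct and matches the paper's own derivation exactly: the paper obtains Corollary \ref{C:4.4} by replacing $n$ with $m\cdot n$ and taking the composition $[n,n,\ldots,n]$ in Corollary \ref{C:4.2}, which is precisely your specialization, including the collapse $a_i = i\cdot n$ that turns the exponent into $n(k_2+2k_3+\cdots+(m-1)k_m)$.
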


We remark that the well-known $q$-Vandermonde's convolution identity
\begin{equation*}
\sum_{i=0}^{k}q^{(k-i)(t-i)}\binom{t}{i}_q\binom{n-t}{k-i}_q=\binom{n}{k}_q\ 
\end{equation*}
is the case $m=2$ and $\lambda=[t,n-t]$ in Corollary \ref{C:4.1}. Similarly, by Corollary \ref{C:4.2}, we get
\begin{equation*}
\sum_{i=0}^{k}q^{(k-i)t}\binom{t-1+i}{i}_q\binom{n-t+k-i}{k-i}_q=\binom{n+k}{k}_q\ .
\end{equation*}

For $q\to 1$, the limiting case of Corollaries \ref{C:4.1} and \ref{C:4.2} read as

\begin{corollary}\label{C:4.5}
	Let $k$, $m$ and $n$ be three positive integers and let $[\lambda_1,\lambda_2,\ldots,\lambda_m]$ be a composition of $n$. Then
	\begin{equation*}
	\sum_{k_1+k_2+\cdots+k_m=k}\prod_{i=1}^{m}\binom{\lambda_i}{k_i}=\binom{n}{k}\ .
	\end{equation*}
\end{corollary}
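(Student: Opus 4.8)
The plan is to obtain this identity as the limiting case $q\to 1$ of Corollary \ref{C:4.1}, exactly as the surrounding text announces. First I would recall that the left-hand side of Corollary \ref{C:4.1} carries the prefactor $q^{a_1k_2+\cdots+a_{m-1}k_m-e_2(k_1,\ldots,k_m)}$ multiplying $\prod_{i=1}^{m}\binom{\lambda_i}{k_i}_q$, while its right-hand side is $\binom{n}{k}_q$. The only analytic input needed is that each $q$-binomial coefficient is a polynomial in $q$ that specializes to the ordinary binomial coefficient at $q=1$, so that $\binom{\lambda_i}{k_i}_q\to\binom{\lambda_i}{k_i}$ and $\binom{n}{k}_q\to\binom{n}{k}$; moreover, for each term of the finite sum the exponent of $q$ in the prefactor is a fixed integer, so that power tends to $1$. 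Since the summation $\sum_{k_1+\cdots+k_m=k}$ is finite, the limit $q\to 1$ commutes with it term by term, each summand converges to $\prod_{i=1}^{m}\binom{\lambda_i}{k_i}$, and the equality follows.

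Alternatively, and perhaps more transparently, I would derive the corollary directly from Theorem \ref{T:1} with $f=e$ by specializing every variable to $1$. Since $e_k(\underbrace{1,\ldots,1}_{N})=\binom{N}{k}$ counts the $k$-subsets of an $N$-element set, each block factor $e_{k_i}(x_{a_{i-1}+1},\ldots,x_{a_i})$ becomes $\binom{\lambda_i}{k_i}$ (the block $B_i$ has $\lambda_i=a_i-a_{i-1}$ entries), while the left-hand side $e_k(1,\ldots,1)$ with $n$ ones becomes $\binom{n}{k}$. Substituting these specializations into \eqref{e:1.1} then gives the identity at once, without any passage to a limit.

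There is no genuine obstacle here: the entire content is already contained in Corollary \ref{C:4.1} (equivalently, in Theorem \ref{T:1}), and the present statement is merely its $q=1$ specialization. The only point one must verify is the routine one that every factor in play is a polynomial in $q$, so that the term-by-term limit is legitimate and no indeterminate forms arise; the direct specialization $x_i=1$ of Theorem \ref{T:1} sidesteps even this observation. I would present the direct route as the main argument and mention the $q\to1$ route as the reason this corollary is grouped with the quantum identities.
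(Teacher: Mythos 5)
Your proposal is correct and matches the paper, which justifies this corollary in exactly the same two ways: as the $q\to 1$ limit of Corollary \ref{C:4.1}, and (``on the other hand'') as the immediate specialization $x_1=\cdots=x_n=1$ of Theorem \ref{T:1} via $e_k(\underbrace{1,\ldots,1}_{n})=\binom{n}{k}$. Both your limiting argument and your direct substitution are sound, so there is nothing to add.
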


\begin{corollary}\label{C:4.6}
	Let $k$, $m$ and $n$ be three positive integers and let $[\lambda_1,\lambda_2,\ldots,\lambda_m]$ be a composition of $n$. Then
	\begin{equation*}
	\sum_{k_1+k_2+\cdots+k_m=k}\prod_{i=1}^{m}\binom{\lambda_i+k_i-1}{k_i}=\binom{n+k-1}{k}\ .
	\end{equation*}
\end{corollary}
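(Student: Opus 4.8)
The plan is to obtain this identity as the $q\to 1$ limit of Corollary \ref{C:4.2}, which has just been established. First I would recall the two elementary facts $\lim_{q\to 1}\binom{a}{b}_q=\binom{a}{b}$ for the $q$-binomial coefficient and $\lim_{q\to 1}q^{N}=1$ for any fixed integer exponent $N$.

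Starting from the identity of Corollary \ref{C:4.2},
$$\sum_{k_1+k_2+\cdots+k_m=k}q^{a_1k_2+\cdots+a_{m-1}k_m}\prod_{i=1}^{m}\binom{\lambda_i+k_i-1}{k_i}_q=\binom{n+k-1}{k}_q,$$
I would then let $q\to 1$ on both sides. Since the index set is finite (it is the set of weak compositions $k_1+\cdots+k_m=k$ with each $k_i\ge 0$), the limit passes termwise through the sum. In each summand the prefactor $q^{a_1k_2+\cdots+a_{m-1}k_m}$ tends to $1$ and each factor $\binom{\lambda_i+k_i-1}{k_i}_q$ tends to $\binom{\lambda_i+k_i-1}{k_i}$, while the right-hand side tends to $\binom{n+k-1}{k}$. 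This immediately yields the claimed identity.

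An alternative, self-contained route avoids the $q$-machinery entirely: apply Theorem \ref{T:1} with $f=h$ and specialize all variables to $1$. Each block $\{x_{a_{i-1}+1},\ldots,x_{a_i}\}$ then consists of exactly $\lambda_i=a_i-a_{i-1}$ ones, so that $h_{k_i}(\underbrace{1,\ldots,1}_{\lambda_i})=\binom{\lambda_i+k_i-1}{k_i}$ counts the size-$k_i$ multisets on $\lambda_i$ letters, while $h_k(\underbrace{1,\ldots,1}_{n})=\binom{n+k-1}{k}$; substituting these evaluations into \eqref{e:1.1} gives the result directly, and bypasses any limiting argument.

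Neither route presents a genuine obstacle. The only point requiring (routine) care in the first route is the interchange of the limit $q\to1$ with the finite summation, which is automatic here; in the second route it is the standard all-ones evaluation $h_k(\underbrace{1,\ldots,1}_{n})=\binom{n+k-1}{k}$. I would most likely present the $q\to1$ route, since it exhibits Corollary \ref{C:4.6} precisely as the degeneration of the quantum identity, but I would remark that the direct specialization of Theorem \ref{T:1} is equally short.
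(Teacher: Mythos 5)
Your proposal is correct and matches the paper exactly: the paper derives Corollary \ref{C:4.6} as the $q\to 1$ limit of Corollary \ref{C:4.2} and also notes it follows directly from Theorem \ref{T:1} via the specialization $x_1=\cdots=x_n=1$ with $h_k(\underbrace{1,\ldots,1}_n)=\binom{n+k-1}{k}$, which are precisely your two routes. Both of your arguments are sound, so there is nothing to add.
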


On the other hand, these corollaries are immediate from Theorem \ref{T:1} because it is the case 
$$x_1=x_2=\cdots = x_n$$
in this theorem and
$$
e_k(\underbrace{1,\ldots,1}_n)=\binom{n}{k}\qquad\mbox{and}\qquad h_k(\underbrace{1,\ldots,1}_n)=\binom{n+k-1}{k}\ .
$$

Now, we can easily derive the following two identities
\begin{equation}\label{eq:4.1}
\sum_{k_1+k_2+\cdots+k_m=k}\prod_{i=1}^{m}\binom{n}{k_i}=\binom{m\cdot n}{k}\ ,
\end{equation}
\begin{equation}\label{eq:4.2}
\sum_{k_1+k_2+\cdots+k_m=k}\prod_{i=1}^{m}\binom{n+k_i-1}{k_i}=\binom{m\cdot n+k-1}{k}\ ,
\end{equation}
that can be rewritten in this way:

\begin{corollary}\label{C:4.7}
	Let $k$, $m$ and $n$ be three positive integers. Then
	\begin{equation}\label{eq:4.3}
	\sum_{\substack{\lambda\vdash k\\l(\lambda)\le m}}\binom{m}{m-l(\lambda),\lambda}\prod_{i=1}^{m}\binom{n}{i}^{t_i(\lambda)}=\binom{m\cdot n}{k}\ .
	\end{equation}
\end{corollary}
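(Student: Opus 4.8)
The plan is to obtain \eqref{eq:4.3} directly from the already-established identity \eqref{eq:4.1} by regrouping its summation index set according to partition type. In \eqref{eq:4.1} the sum ranges over all sequences $(k_1,k_2,\ldots,k_m)$ of nonnegative integers with $k_1+k_2+\cdots+k_m=k$. From such a sequence I would discard the entries equal to $0$ and arrange the remaining positive entries in weakly decreasing order; this produces a partition $\lambda\vdash k$ with $l(\lambda)\le m$, since at most $m$ of the $k_i$ can be positive. Thus every sequence occurring in \eqref{eq:4.1} is associated with a unique partition $\lambda$ with $l(\lambda)\le m$, and the outer sum in \eqref{eq:4.3} is organized over precisely the partitions that arise.

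The main step is to count, for a fixed partition $\lambda$, how many sequences $(k_1,\ldots,k_m)$ reduce to $\lambda$ in this way. Such a sequence is an arrangement into $m$ labelled positions of the $l(\lambda)$ positive parts of $\lambda$ together with $m-l(\lambda)$ zeros. Since the $t_i(\lambda)$ copies of each value $i$ are interchangeable and so are the $m-l(\lambda)$ zeros, the number of distinct arrangements is exactly the multinomial coefficient
$$\binom{m}{m-l(\lambda),\lambda}=\frac{m!}{(m-l(\lambda))!\,t_1(\lambda)!\cdots t_k(\lambda)!}\ .$$
I expect this count to be the one point requiring care: one must recognize that the zeros constitute an additional block of size $m-l(\lambda)$ contributing its own factorial to the denominator, alongside the factorial from each repeated positive part. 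Apart from this bookkeeping the argument is a pure reindexing, so I anticipate no substantive obstacle.

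Finally, I would observe that the summand $\prod_{i=1}^{m}\binom{n}{k_i}$ is constant across all sequences associated with the same $\lambda$: each zero entry contributes a factor $\binom{n}{0}=1$, while each occurrence of a positive value $i$ contributes $\binom{n}{i}$, so the common value is $\prod_{i\ge 1}\binom{n}{i}^{t_i(\lambda)}$. Multiplying this constant value by the number of sequences that yield $\lambda$ and summing over all admissible $\lambda$ converts \eqref{eq:4.1} into
$$\binom{m\cdot n}{k}=\sum_{\substack{\lambda\vdash k\\l(\lambda)\le m}}\binom{m}{m-l(\lambda),\lambda}\prod_{i\ge 1}\binom{n}{i}^{t_i(\lambda)}\ ,$$
which is the asserted identity \eqref{eq:4.3} (here $t_i(\lambda)=0$ for every value $i$ not occurring in $\lambda$, so the product is effectively taken over the distinct parts of $\lambda$).
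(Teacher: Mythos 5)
Your proposal is correct and follows essentially the same route as the paper, which obtains \eqref{eq:4.3} precisely by observing that \eqref{eq:4.1} ``can be rewritten'' by grouping the compositions $(k_1,\ldots,k_m)$ according to their underlying partition; your explicit count of the $\binom{m}{m-l(\lambda),\lambda}$ rearrangements supplies the bookkeeping the paper leaves implicit (the paper also notes the equivalent shortcut of setting $x_1=\cdots=x_n$ in Corollary \ref{C:1.1}). Your replacement of $\prod_{i=1}^{m}$ by $\prod_{i\ge 1}$ is in fact the correct reading of the statement, since a partition with at most $m$ parts may well have parts exceeding $m$.
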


\begin{corollary}\label{C:4.8}
	Let $k$, $m$ and $n$ be three positive integers. Then
	\begin{equation}\label{eq:4.4}
	\sum_{\substack{\lambda\vdash k\\l(\lambda)\le m}}\binom{m}{m-l(\lambda),\lambda}\prod_{i=1}^{m}\binom{n+i-1}{i}^{t_i(\lambda)}=\binom{m\cdot n+k-1}{k}\ .
	\end{equation}
\end{corollary}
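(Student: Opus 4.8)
The plan is to read \eqref{eq:4.4} as nothing more than the partition-indexed repackaging of the composition identity \eqref{eq:4.2}, in exactly the way Corollary~\ref{C:1.1} repackages Theorem~\ref{T:1}. In fact the fastest route bypasses \eqref{eq:4.2} and applies Corollary~\ref{C:1.1} directly: take $f=h$ and specialize $x_1=x_2=\cdots=x_n=1$. The left-hand side of Corollary~\ref{C:1.1} then becomes $h_k$ evaluated at $mn$ copies of $1$, which equals $\binom{mn+k-1}{k}$ by the stated value $h_k(\underbrace{1,\ldots,1}_{n})=\binom{n+k-1}{k}$; this is the right-hand side of \eqref{eq:4.4}. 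On the right-hand side of Corollary~\ref{C:1.1}, each $f_\lambda(1,\ldots,1)=\prod_{i\ge 1}h_i(1,\ldots,1)^{t_i(\lambda)}=\prod_{i\ge 1}\binom{n+i-1}{i}^{t_i(\lambda)}$, so \eqref{eq:4.4} falls out at once.

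If instead one prefers a self-contained derivation from \eqref{eq:4.2} (the route the surrounding text suggests), I would group the $m$-tuples $(k_1,\ldots,k_m)$ on the left of \eqref{eq:4.2} according to the partition they induce. First I would note that the multiset of positive entries of any nonnegative tuple with $k_1+\cdots+k_m=k$ is a partition $\lambda\vdash k$ with $l(\lambda)\le m$, the remaining $m-l(\lambda)$ entries being zero. Conversely, the number of ordered tuples producing a fixed $\lambda$ is the number of ways to place, among $m$ labelled positions, the prescribed positive values (with $t_i(\lambda)$ of them equal to $i$) together with $m-l(\lambda)$ zeros; this count is precisely the multinomial coefficient $\binom{m}{m-l(\lambda),\lambda}$.

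The remaining step is to observe that the summand $\prod_{i=1}^{m}\binom{n+k_i-1}{k_i}$ of \eqref{eq:4.2} depends only on the multiset $\{k_1,\ldots,k_m\}$: an entry equal to $i$ contributes the factor $\binom{n+i-1}{i}$, while a zero entry contributes $\binom{n-1}{0}=1$. Hence all tuples of a given partition type $\lambda$ share the common summand value $\prod_{i\ge 1}\binom{n+i-1}{i}^{t_i(\lambda)}$, and multiplying this by the multiplicity from the previous step and summing over all admissible $\lambda$ turns the left-hand side of \eqref{eq:4.2} into the left-hand side of \eqref{eq:4.4}; the right-hand side $\binom{mn+k-1}{k}$ is untouched. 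The same argument starting from \eqref{eq:4.1} and the value $e_k(1,\ldots,1)=\binom{n}{k}$ yields Corollary~\ref{C:4.7}.

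I expect no genuine obstacle, only one bookkeeping point: the multiplicity must treat the $m-l(\lambda)$ zero positions as an extra block, contributing the factor $(m-l(\lambda))!$ in the denominator of $\binom{m}{m-l(\lambda),\lambda}$; forgetting these positions would undercount the tuples. I would also record that the product is most naturally taken over all part sizes $i\ge 1$ occurring in $\lambda$ (in line with the definition $f_\lambda=\prod_{i\ge 1}f_i^{t_i(\lambda)}$), since a part of $\lambda$ may be as large as $k$.
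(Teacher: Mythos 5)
Your proof is correct and takes essentially the same route as the paper, which likewise obtains \eqref{eq:4.4} both as the specialization $x_1=x_2=\cdots=x_n$ (evaluated at $1$) of Corollary \ref{C:1.1} and as a regrouping of the composition identity \eqref{eq:4.2} by partition type with the multinomial coefficient counting the tuples of each type. Your closing remark that the product should really run over all part sizes $i\ge 1$ rather than just $i\le m$ correctly flags a small imprecision in the stated formula.
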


The identities \eqref{eq:4.3} and \eqref{eq:4.4} are the case $x_1=x_2=\cdots =x_n$ in Corollary \ref{C:1.1}. We can see that the number of terms in the left side of \eqref{eq:4.3} or \eqref{eq:4.4} is equal to the number of partitions of $k$ into no more than $m$ parts. It is clear that this number is less than the number of terms in the left side of \eqref{eq:4.1} or \eqref{eq:4.2} that is equal to the number of compositions of $k+m$ into exactly $m$ parts less than or equal to $n$. 

A new technique for proving and discovering combinatorial identities has been introduced. Many problems can be easily solved and they can often be extended.



\end{document}